\documentclass[12pt]{amsart}
\usepackage{amssymb,latexsym}
\usepackage{amsthm}
\usepackage{amsmath}
\usepackage{amsfonts}
\usepackage{url}
\usepackage{hyperref}

\newtheorem{theorem}{Theorem}
\newtheorem{example}{Example}

\newtheorem{remark}{Remark}

\newtheorem{corollary}{Corollary}
\newtheorem{lemma}{Lemma}
\newtheorem{proposition}{Proposition}
\newtheorem{definition}{Definition}

\def\ppp{{\mathbb{P}}}

\def\qqq{\mathbb{Q}}
\def\rrr{\mathbb{R}}
\def\ttt{\mathbb{T}}
\def\ccc{\mathbb{C}}
\def\zzz{\mathbb{Z}}

\def\Div{{\rm Div}}
\def\Pic{{\rm Pic}}
\def\Prin{{\rm Prin}}
\def\Aut{{\rm Aut}}
\def\suchthat{{\ :\ }}

\begin{document}

\author{David Joyner, Amy Ksir, and Caroline Grant Melles}
\address{David Joyner, Mathematics Department, United States Naval Academy, Annapolis, MD 21402}
\email{wdj@usna.edu}

\address{Amy Ksir, Mathematics Department, United States Naval Academy, Annapolis, MD 21402}
\email{ksir@usna.edu}

\address{Caroline Grant Melles, Mathematics Department, United States Naval Academy, Annapolis, MD 21402}
\email{cgg@usna.edu}

\title[Automorphism Groups on Tropical Curves]{Automorphism Groups on Tropical Curves:  \\
Some Cohomology Calculations}
\subjclass[2010]{14T05, 14H37}

\date{2010-10-15}
\maketitle

\begin{abstract}

Let $X$ be an abstract tropical curve and let $G$ be a finite
subgroup of the automorphism group of $X$.
Let $D$ be a divisor on $X$ whose
equivalence class is $G$-invariant.
We address the following question:
is there always a divisor $D'$ in the equivalence class of $D$ which is
$G$-invariant?
Our  main result is that the answer
is ``yes'' for all abstract tropical curves.
A key step in our proof is a
tropical analogue of Hilbert's Theorem 90.

\end{abstract}

\section {Introduction} We begin by defining an abstract tropical curve $X$
in terms of star-shaped sets, as a generalization of a
metric graph in which all leaves have infinite length.
Our definition is based on papers of Zhang \cite{Z},
Baker and Rumely \cite{BR}, and Haase, Musiker, and Yu \cite{HMY}.  See also
Mikhalkin and Zharkov \cite{MZ},  Baker and Faber \cite{BF}, and
Richter-Gebert, Sturmfels, and Theobald \cite{RST}.
We define rational functions, divisors,
and divisor classes in this setting, following the conventions of
Mikhalkin and Zharkov \cite{MZ}, Gathmann and Kerber \cite{GK}, and Haase, Musiker, and Yu \cite{HMY}.
We note that the automorphism group of an abstract tropical curve $X$
is necessarily finite unless $X$ is homeomorphic to a circle or a closed interval.

In Section 3 we review basic definitions of group cohomology  and set up two
long exact sequences which will be used to prove our main results. These long exact sequences
give relationships among the cohomology groups of $G$ with coefficients in
the real numbers $\rrr$, the group $M(X)$ of rational functions on $X$,
the group $\Prin(X)$ of principal divisors on $X$, the group $\Div(X)$ of
divisors on $X$, and the Picard group
$\Pic(X)$ of classes of linearly equivalent divisors on $X$.

In Section 4 we use methods
similar to those used in the classical case in Goldstein,
Guralnick, and Joyner \cite{GGJ} to show  that if $G$ is a finite subgroup of
the automorphism group of $X$ then

\begin{enumerate}
\item      $H^1(G,\rrr)=0$,
\item     $H^1(G,M(X))=0$ (Tropical Analogue of Hilbert's Theorem 90),
\item   $H^2(G,\rrr)=0$, and
\item    $H^1(G,\Prin(X))=0$ (a direct consequence of the vanishing of
$H^1(G,M(X))$ and $H^2(G,\rrr)$).
\end{enumerate}
The vanishing of $H^1(G,\rrr)$ implies that every $G$-invariant
principal divisor is the image of
a $G$-invariant rational function.
The vanishing of $H^1(G,\Prin(X))$ gives our main result, which is
that every $G$-invariant divisor class contains
a $G$-invariant divisor.

In Section 5 we give two additional results on group cohomology for abstract
tropical curves.
We show that if $G$ is a finite subgroup of
the automorphism group of $X$ then $H^1(G,\Div(X))=0$ and
$H^2(G,M(X)\otimes \qqq)=0$.  It would be
interesting to know whether $H^2(G,M(X))$ vanishes, since this would be a tropical analogue of Tsen's Theorem.

We conclude in Section 6 with some remarks on invariance in degree 0.

\section{Background on Abstract Tropical Curves}

Let
$\Bbb T$ be the tropical semiring
     $$ \Bbb T = \rrr \cup \{ - \infty \}$$
with the tropical operations
     $$x \oplus y = \max \{x,y\}$$
and
     $$x \odot y = x + y$$
(so tropical multiplication is classical addition).
We follow the conventions of Mikhalkin \cite{M1}, using max rather than min
for tropical addition.

Note that there is no inverse for tropical addition, but that $-\infty$
is a neutral element for tropical addition since
     $$-\infty \oplus x = \max \{-\infty,x\} = x$$
for any $x $ in $\ttt$.

Similarly, $0$ is a neutral element for tropical multiplication since
     $$0 \odot x = 0 + x = x$$
for any $x$ in $\ttt$.
Every element $x$ of $\Bbb T$ except $-\infty$ has an inverse $-x$
under tropical
multiplication.

The topology on  $\ttt$ will be taken to be the topology generated by
all open sets of $\rrr$ plus all sets of the form
$[-\infty, b) = \{-\infty\} \cup (-\infty,b)$ for $b \in \rrr$.  In this topology,
the set $[-\infty,b]$ is compact.

For convenience, we sometimes omit the tropical operators.  For example, a
tropical polynomial
     $$\sum_{i=0}^n a_i x^i,$$
with $a_i \in \Bbb T$ for all $i$,
means
     $$\max\{a_i+ix \}.$$
Thus  a tropical polynomial on $\rrr$ is a piecewise linear function with nonnegative integer slopes, except
when it is identically $-\infty$, i.e.,
except when $a_i=- \infty$ for all $i$.

A tropical polynomial in two variables may be used to define a tropical
curve embedded in $\rrr^2$, whose support is the nonlinear locus of the polynomial.
Embedded tropical curves may also be defined in $\rrr^n$ and in tropical projective space
$\ttt \ppp^n$.  See, e.g., Mikhalkin \cite{M2} and \cite{M3}, Richter-Gebert, Sturmfels, and Theobald \cite{RST},
Speyer and Sturmfels \cite{SS}, and Maclagan and Sturmfels \cite{MS}.
In this paper, however, we are concerned with abstract tropical curves, rather than
embedded curves.

There are several ways to define an abstract tropical curve.
We define an abstract tropical curve in terms of star-shaped sets, as a
generalization of a metric (or metrized) graph in which all leaves have infinite length.
Our definition is based on papers of Zhang \cite{Z},
Baker and Rumely \cite{BR}, and Haase, Musiker, and Yu \cite{HMY}.  See also
Mikhalkin and Zharkov \cite{MZ}, Mikhalkin \cite{M1}, and Baker and Faber \cite{BF}.

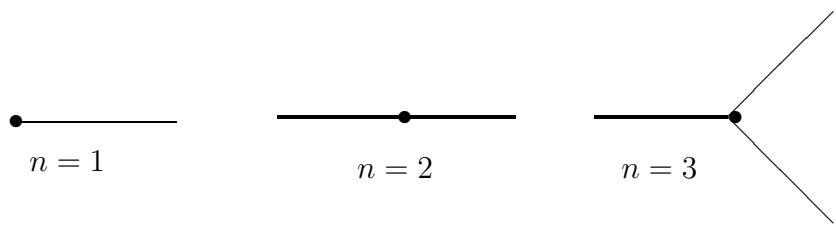
\begin{figure}[h!]
\begin{center}
\begin{tabular}{ccc}
\unitlength=0.620000pt
\begin{picture}(210.00,192.00)(0.00,0.00)
\put(-3.00,46.00){$\bullet$} 

\thinlines
\put(0.00,50.00){\line(1,0){100.00}} 
\put(10.00, 20.00){$n=1$}

\end{picture}

&
\begin{picture}(80,80)(20,20)
\linethickness{0.3mm}
\put(-20,53){\line(1,0){90}}
\put(25.00,50.00){$\bullet$}
\put(10.00, 30.00){$n=2$}

\end{picture}

&
\begin{picture}(80,80)(20,20)
\linethickness{0.3mm}
\put(10,53){\line(1,0){50}}
\put(60.00,50.00){$\bullet$}
\put(60,53){\line(1,1){40}}
\put(60,53){\line(1,-1){40}}
\put(20.00, 30.00){$n=3$}

\end{picture}

\end{tabular}
\caption{Star-shaped set having $n$ arms.}
\end{center}
\label{fig:stars1}
\end{figure}

\begin{definition}
{\rm [Star-shaped set]

A \textbf{star-shaped set} is a set of the form $$S(n,r)=\{ z \in \ccc : z = t e^{\frac{2 \pi i k}{n}} \mbox{ for some } t \in [0, r) \mbox{ and } k \in \zzz \} $$
 where $n$ is a positive integer and $r$ is a positive real number.
 For a fixed $k \in \zzz$ the subset $\{z \in \ccc : z = t e^{\frac{2 \pi i k}{n}} \mbox{ for some } t \in [0, r)\} $ is called an \textbf{arm}; the number of distinct arms is $n$.
 The point at which $z=0$ is called the \textbf{center} of the star-shaped set.  We give each arm of $S(n,r)$ the metric induced from the Euclidean metric on $\ccc$; we give $S(n,r)$ as a whole the path metric and the metric topology.}
 \end{definition}


\begin{definition}
{\rm [Metric topological graph]

Let $X$ be a compact connected topological space such that each point $P \in X$ has a neighborhood homeomorphic to a star-shaped set $S(n_p,r_P)$, where the homeomorphism takes $P$ to the center of the star-shaped set.  The positive integer $n_P$, which is the number of arms of $S(n_P,r_P)$, is called the \textbf{valence} of $P$.
Let $X_0$ be $X \setminus \{P \in X : n_P = 1\}$, i.e., $X$ with its 1-valent points removed.  A \textbf{metric topological graph} is a topological space $X$ as above, with a metric space structure on $X_0$ so that each point $P \in X_0$ has a neighborhood isometric to $S(n_P,r_P)$ for some integer $n_P$ and some positive real number $r_P$.  }
\end{definition}

Note that by compactness, there will be at most finitely many points $P \in X$ with valence $n_P \neq 2$.

\begin{definition}
{\rm [Model of a metric topological graph]

 Suppose that $X$ is a metric topological graph.  Let $V$ be any finite nonempty subset of $X$ such that $V$ contains all of the points with valence $n_P \neq 2$.  Then $X \setminus V$ is homeomorphic to a finite disjoint union of open intervals.  For a given $X$, such a choice of $V$ gives rise to a graph $G(X,V)$ with $V$ as the vertex set and the connected components of $X \setminus V$ as the edge set.  This graph is called a \textbf{model} for $X$.  Unless $X$ is homeomorphic to a circle, we can take $V$ to be $\{ P \in X : n_P \neq 2\}$; we will call the associated graph the \textbf{minimal graph} for $X$.  For any model of $X$, an edge adjacent to a 1-valent vertex is called a \textbf{leaf};  the other edges are called \textbf{inner edges}.  }
 \end{definition}

\begin{definition}
{\rm
[Abstract tropical curve]

Let $X$ be a metric topological graph such that, in every model,
all inner edges have finite length and all leaves have infinite length.
An \textbf{abstract tropical curve} is such a metric topological graph, with a positive integer \textbf{multiplicity} associated to each edge of its minimal graph, or, in the case of a circle, a multiplicity associated to the circle itself.
}
\end{definition}

We will call 1-valent vertices of an abstract tropical curve
\textbf{infinite points}.  All other points are called \textbf{finite points}.
We note that the topology near a 1-valent point is not the metric topology,
because the leaf with its endpoints is compact but has infinite length.  Note also that if $P$ is a 1-valent point,
then there is a homeomorphism
$\iota$ from an interval $[-\infty, b)$ in $\ttt$, where $b \in \rrr$,
to a neighborhood of $P$ in $X$, such that $\iota$ takes $-\infty$ to $P$
and such that the restriction of $\iota$ to $(-\infty, b)$
is an isometry.

\begin{remark}
{\rm
Given a finite graph $G$ with
\begin{enumerate}
\item a finite length associated to each inner edge,
\item infinite length associated to each leaf, and
\item a positive integer multiplicity associated to each edge,
\end{enumerate}
there is a tropical curve (as defined above) with $G$ as a model.}
\end{remark}

\begin{definition}
{\rm
[Automorphisms of abstract tropical curves]

An \textbf{automorphism} $g :  X \rightarrow X$
of an abstract tropical curve $X$ will be defined to be a
map such that
\begin{enumerate}
  \item  $g$ is a homeomorphism on the underlying topological space of $X$,
   \item $g$ is an isometry on $X_0$, and
   \item  $g$ preserves multiplicities.
\end{enumerate}
}
\end{definition}

\begin{remark}
{\rm
If $X$ is not homeomorphic to a circle, then $g$ will be a graph automorphism on the minimal graph for $X$, taking vertices to vertices and edges to edges.}
\end{remark}

The automorphisms
of $X$ form a group, Aut($X$).  In the classical case, Hurwitz's automorphism
theorem gives a bound on the number of automorphisms of a smooth complex projective
algebraic curve of genus $g > 1$.  In the tropical case, we note the following bound.

\begin{theorem}
{\rm
If an abstract tropical curve $X$ has a minimal graph with only one edge, or is homeomorphic to a circle,
then Aut($X$) contains an infinite number of translations.  Otherwise,
the automorphism group Aut($X$) of $X$ is finite, and
moreover if $l$ is the number of leaves of the minimal model for $X$ and $i$ is the
number of inner edges, then
$\Aut(X)$ is contained in the product of symmetric groups $S_l \times S_{2i}$.
}
\end{theorem}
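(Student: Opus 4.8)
The plan is to dispose of the two exceptional configurations directly and then to embed $\Aut(X)$ into a finite permutation group in the remaining case. Suppose first that $X$ is homeomorphic to a circle. Then $X_0 = X$ is a metric circle of some finite circumference, and rotation by any real arclength is an isometry of $X_0$ preserving the single multiplicity attached to the circle; these rotations constitute an infinite subgroup of $\Aut(X)$. Suppose instead that the minimal graph has exactly one edge. Summing valences over its two endpoints gives $2$, and since neither endpoint can have valence $2$ in a minimal graph, both endpoints are $1$-valent; hence $X$ is a bi-infinite segment with $X_0 \cong \rrr$. Every translation $t \mapsto t + c$ of $\rrr$ then extends to a homeomorphism of $X$ fixing the two infinite points and preserving the one multiplicity, so again $\Aut(X)$ is infinite. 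This settles the first assertion.

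For the second assertion I would assume $X$ is not a circle and that its minimal graph $\Gamma$ has at least two edges, and I would produce an injective homomorphism $\phi : \Aut(X) \to S_l \times S_{2i}$. By the Remark that a non-circle automorphism restricts to an automorphism of the minimal graph, each $g \in \Aut(X)$ permutes the vertices and edges of $\Gamma$. Since $g$ is a homeomorphism it preserves valence, carrying $1$-valent vertices to $1$-valent vertices, and since $g$ is an isometry on $X_0$ it preserves edge length, carrying infinite-length leaves to leaves and finite-length inner edges to inner edges. Thus $g$ determines a permutation of the $l$ leaves and, more finely, a permutation of the $2i$ endpoints (half-edges) of the inner edges, the latter recording not only the image of each inner edge but also whether $g$ reverses it. Packaging these two permutations as $\phi(g) \in S_l \times S_{2i}$ manifestly gives a homomorphism.

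The crux is the injectivity of $\phi$, and this is the step I expect to be the main obstacle, since it rests on the rigidity of isometries of a segment rather than on combinatorics. Assume $\phi(g)$ is trivial. The triviality of the $S_{2i}$ component means $g$ fixes each inner edge together with both endpoints; as the only self-isometry of a finite segment fixing both endpoints is the identity, $g$ is the identity on every inner edge and on every finite vertex. The triviality of the $S_l$ component means $g$ maps each leaf to itself; each leaf, minus its infinite point, is isometric to a half-line $[0,\infty)$ whose only self-isometry is the identity, so $g$ is the identity on every leaf as well. Since every point of $X$ lies on some edge, $g$ is the identity, proving injectivity. The appearance of $2i$ rather than $i$ is exactly what is needed here, because the coarser permutation of inner edges alone does not detect an edge-reversing $g$.

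It follows that $\Aut(X)$ embeds into $S_l \times S_{2i}$, hence has order at most $l!\,(2i)!$ and in particular is finite, which completes the proof. I would note in passing that the image of $\phi$ in fact lies in the subgroup of $S_{2i}$ preserving the pairing of half-edges into edges, although the stated containment in $S_l \times S_{2i}$ is all that is claimed.
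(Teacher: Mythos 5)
Your proof is correct and takes essentially the same approach as the paper's: translations dispose of the circle and single-edge cases, and otherwise an automorphism is pinned down by how it permutes the $l$ leaves (one isometry per pair of leaves, since finite endpoints must map to finite endpoints) and the $2i$ half-edges of inner edges (two isometries per pair of inner edges), giving the embedding into $S_l \times S_{2i}$. Your version merely makes explicit, via the rigidity of self-isometries of a finite segment and of a half-line, the injectivity that the paper's terse counting of isometries leaves implicit.
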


\begin{proof}
In the case where $X$ has a minimal graph with
only one edge, or is homeomorphic to a circle, a translation satisfies all three conditions to be an automorphism.  In any other case, each leaf must have a finite endpoint, and any automorphism of $X$ will map a leaf to another leaf, with the finite endpoint mapping to the finite endpoint and the infinite endpoint mapping to the infinite endpoint.  For each pair of leaves, there is exactly one way to do this preserving the metric on $X_0$.  Similarly,
an automorphism of $X$ must map an inner edge of the minimal graph isometrically to another inner edge of the minimal graph, with the same length and multiplicity.  For each such pair of edges, there are two such isometries.
\end{proof}

\begin{remark}
{\rm
The tropical projective line $\ttt\ppp^1$ is a single edge of infinite length plus its endpoints, and the circle is a genus 1 tropical curve.  See Mikhalkin \cite{M1} for more details.
}
\end{remark}

\begin{example}\label{ex:ex1}
{\rm
Let $n$ be an integer greater than $1$, and let $\Gamma_n$ be the abstract tropical curve consisting of $n$ leaves,
with their endpoints, emanating
from a single $n$-valent point.  Then $\Aut(\Gamma_n) = S_n$.
}
\end{example}

Let $X$ be an abstract tropical curve and let
$f$ be a continuous real-valued function on $X_0$.
Let $P$ be a point in $X_0$ and
let $\iota: S(n_P,r_P) \rightarrow U_P$ be an isometry from a star-shaped set to a neighborhood of $P$,
taking the center of $S(n_P,r_P)$ to $P$.
We will say that $f$ is {\bf piecewise linear at $P$} if
$f\circ \iota$
is piecewise linear on each arm of the star-shaped set.  In other words, for each $k \in \{1, \ldots, n_P \}$, the composition
$[0,r_P) \to \rrr$ given by $t \mapsto f(\iota(t e^{\frac{2 \pi i k}{n_P}}))$ is piecewise linear.
If $f$ is piecewise linear at every point $P \in X_0$,
we will say that it is {\bf piecewise linear on $X$}.
A point of $X_0$ at which $f$ is not linear is called a {\bf singular}
point of $f$.  If $f$ is not locally constant at a point $P$ of valence $n_P > 2$,
then $P$ is a singular point of $f$.
 The slope of $f$,  on any open set on which $f$ is linear,
is well-defined up to sign.  We will say that $f$ is
\textbf{piecewise linear with integer slope}
if $f$ is piecewise linear and has integer slope on any open set on which it
is linear.

Recalling that tropical polynomials on $\rrr$ (if not identically $-\infty$)
are piecewise linear functions
with nonnegative integer slope, and that tropical division
corresponds to classical
subtraction, we define rational functions as follows.

\begin{definition}
{\rm
[Rational functions on an abstract tropical curve]

A \textbf{rational function} on an abstract tropical curve $X$ is
 a continuous real-valued function on $X_0$, the abstract tropical
curve minus its 1-valent points,
which is piecewise linear with integer slope and which has only finitely many
singular points.
Note that a rational
function does not have to be defined at the 1-valent points.

Note also that for the purposes
of this paper, we do not include functions which are identically equal to
$- \infty$ in the set of rational functions.
}
\end{definition}

Let $M(X)$ denote the set of all rational functions on $X$.  Note that
$M(X)$ forms a group with identity element $0$ under tropical multiplication
(classical addition).

Automorphisms of $X$ act on $M(X)$ via their action on $X$.
If $g$
is an automorphism of $X$ and $f$ is a rational function
on $X$, then $gf$ is the rational function given by
     $$gf(P) = f(g^{-1}(P))$$
for every point $P$ in the abstract tropical
curve without infinite points $X_0$, i.e., $gf = f \circ g^{-1} : X_0 \rightarrow \rrr$.

\begin{definition}
{\rm
[Divisors on abstract tropical curves]

 A \textbf{divisor} on an abstract tropical curve $X$
is a finite formal sum of the form
     $$D = \sum_{P \in X} a_P P$$
where, for each $P$, $a_P$ is an integer, and all but finitely many are $0$.
}
\end {definition}

The collection of all divisors on $X$ forms a group $\Div(X)$ under addition,
i.e., the free group over $\zzz$ generated by the points of $X$.

\begin{definition}
{\rm
[Order of a rational function $f$ at a point $P$ of $X$]

Let $f$ be a rational function on an abstract tropical curve $X$.
Essentially, the \textbf{order} of $f$ at a point $P$ of $X$
is the weighted sum of all slopes of $f$ in the direction outward from $P$, for all
edges emanating from $P$, where each edge is weighted according to its multiplicity.
We state this condition more explicitly below.

First, consider the case in which $P$ is not
an infinite point, i.e., which is not
$1$-valent.  Then there is an isometry $\iota$ from a star-shaped set $S(n_P,r_P)$ to a neighborhood of $P$,
taking the center of $S(n_P,r_P)$ to $P$.  Since $f$ is a rational function,
we can restrict the neighborhood and choose a smaller $r_P$, if necessary, so that $f$ is linear on each arm of
$S(n_P, r_P)$.  Thus for each integer $k \in \{1,...,n_P \}$, the composition
$[0,r_P) \to \rrr$ given by $t \mapsto f(\iota(t e^{\frac{2 \pi i k}{n_P}}))$ is linear, with integer slope,  i.e.,
$$ f(\iota(t e^{\frac{2 \pi i k}{n_P}})) = \lambda(k)t + b$$
for some integer $\lambda(k)$ and real number $b$.
We define the order of $f$ at $P$ to be
\[
\text{ord}_P(f) = \sum_{k=1}^{n_P} m(k) \lambda(k),
\]
where $m(k)$ is the multiplicity of the edge which contains the image under $\iota$ of $t e^{\frac{2 \pi i k}{n_P}}$, $0<t<r_P$.

Now suppose that $P$ is a 1-valent point.  Then there is
an isometry $\iota$ from the interval $(-\infty,b)$ in $\rrr$ to a punctured neighborhood of $P$.
Again, since $f$ is a rational function, we can restrict the neighborhood and choose a smaller $b$, if necessary, so that
$f \circ \iota$ is linear with integer slope $\lambda$.  In this case we define
\[
\text{ord}_P(f) = m \lambda,
\]
where again $m$ is the multiplicity of the edge adjacent to $P$.
}
\end{definition}

If a rational function $f$ is linear at a point
$P$, then $\text{ord}_P(f)=0$, so that there are only a finite number of
points $P$ at which $\text{ord}_P(f) \neq 0$ since $f$ has only finitely many singular points.

\begin{definition}
{\rm
[Principal divisors on an abstract tropical curve $X$]

Let $f$ be an element of $M(X)$, i.e., $f$ is a rational
function on the abstract tropical
curve $X$.  We define the \textbf{divisor determined by $f$} to be
     $$\text{div} (f) = (f) = \sum_{P \in X} \text{ord}_P (f)  P.$$
We call such divisors \textbf{principal}.  The set of all principal divisors
forms a subgroup $\Prin (X)$ of $\Div (X)$.
}
\end{definition}

We will say that the \textbf{degree} of a divisor $D=\sum a_P P$ is $\sum a_P$.
Note that the degree of a principal divisor is always 0, since
if $P$ and $Q$ are endpoints of a segment
on which $f$ is linear, the slopes of $f$ emanating from $P$ and $Q$ are
the negative of one another.
(In the special case $P=Q$, i.e., if $f$ is linear on a loop, then $f$ must be constant
on the loop, so the slopes emanating from $P=Q$ on the loop are zero.)
Note also that the degree map is a homomorphism from $\Div(X)$ to $\zzz$.

\begin{definition}
{\rm
[Linear equivalence of divisors]

Divisors $D$ and $D^\prime$  are said to be \textbf{linearly equivalent}
if there is a rational function $f$ such that
\[
D=D^\prime + (f).
\]
}
\end{definition}

\begin{example}\label{ex:nleaf}
{\rm
Let $\Gamma_n$ be the abstract tropical curve
consisting of $n$ leaves, with their endpoints, emanating from a single $n$-valent point $O$.
Let $P$ be any other point on $\Gamma_n$.  Then $P$ and $O$ are
linearly equivalent as divisors, because there is a rational function
with slope 1 on the path from $O$ to $P$ and constant everywhere else.
}
\end{example}

The map div is a group homomorphism
\[
\text{div} : M(X) \rightarrow \Div (X)
\]
from the group of rational functions on $X$ under tropical
multiplication to the group of divisors under addition
since
     $$\text{div} (f_1 \odot f_2) = \text{div} (f_1+f_2) = \text{div} (f_1) + \text{div} (f_2).$$
 The image of the map div is the group $\Prin(X)$ of principal divisors.

The quotient group
\[
\Pic(X)=\Div(X)/\Prin(X)
\]
is called the \textbf{Picard group}.  The elements of the Picard group
are called \textbf{divisor classes}. The divisor class of a divisor $D$
is denoted $[D]$ and consists of all divisors which are linearly equivalent
to $D$.


\begin{example}\label{ex:nleafpic}
{\rm
Let $\Gamma_n$ be as in Example \ref{ex:nleaf}.
Every degree $d$ divisor on $\Gamma_n$ is linearly equivalent to
$dO$, by Example \ref{ex:nleaf}.  Therefore
\[
\Pic(\Gamma_n) \cong \zzz .
\]
}
\end{example}


\section {Background on Group Cohomology}

Let $X$ be an abstract tropical curve
and let $G$ be a finite subgroup of the
automorphism group Aut($X$) of $X$.
Recall that if $X$ has a minimal graph with only one edge, or is homeomorphic to a circle,
then the automorphism group Aut($X$) contains an infinite number of translations.
Otherwise, Aut($X$) is finite, so
every subgroup $G$ of Aut($X$) is necessarily finite.
We review some background material on group cohomology which we will need.
Group cohomology may also be defined in terms of the Ext functor
(see, e.g., Rotman \cite{R} p. 870).
For further information on group cohomology,
we refer to Serre \cite{S}, ch. VII, or the survey
by Joyner \cite{J}.

Let $A$ be a $\zzz[G]$-module.
We can view $\zzz$ as another $\zzz [G]$-module, via the trivial action of $G$ on $\zzz$.
The $0$th cohomology
group of $G$ with coefficients
in $A$ is
     $$H^0(G,A) = \text{Hom}_G(\zzz, A),$$
and is isomorphic to the group $A^G$ of $G$-invariant elements of $A$.
The covariant functor of $G$-invariants,
$A \longmapsto H^0(G,A)\cong A^G$ is left exact.

The \textbf{$1$-cocycles} on $G$ with coefficients in $A$
are defined by
\[
Z^1(G,A) = \{\phi:G\to A\ |\ \forall g_1,g_2\in G,\
\phi(g_1)+g_1 \phi (g_2)= \phi(g_1g_2)\},
\]
the {\textbf{$1$-coboundaries} by
\[
B^1(G,A) = \{ \phi :G\to A\ |\ \exists f\in A \suchthat
\forall g\in G,\ \phi(g)=gf-f\},
\]
and the \textbf{$1$-cohomology} by
\[
H^1(G,A) =Z^1(G,A)/B^1(G,A).
\]
(It is straightforward to check that $B^1(G,A)\subset Z^1(G,A)$.)

The \textbf{$2$-cocycles} on $G$ with coefficients in $A$
are defined by
\[
\begin{array}{l}
Z^2(G,A) =
\{\phi:G\times G\to A\ |\
\forall g_1,g_2,g_3\in G,\\
\qquad \qquad \qquad \qquad  
g_1 \phi(g_2,g_3) - \phi (g_1g_2,g_3)+ \phi(g_1,g_2g_3)- \phi(g_1,g_2)=0\},
\end{array}
\]
the \textbf{$2$-coboundaries}\footnote{It is straightforward to check that
$B^2(G,A)\subset Z^2(G,A)$.} by
\[
\begin{array}{l}
B^2(G,A) = \{\phi:G\times G\to A\ |\ \exists \psi:G\to A \suchthat
\\
\qquad \qquad \qquad \qquad
\forall g_1,g_2\in G,\ \phi(g_1,g_2)=\psi (g_1) + g_1\psi(g_2)- \psi(g_1g_2)\},
\end{array}
\]
and the \textbf{$2$-cohomology} by
\[
H^2(G,A) =Z^2(G,A)/B^2(G,A).
\]

Now we wish to apply this general theory to the case of abstract tropical
curves.  We will describe two short exact sequences.
Lemma \ref{lem:exseq} below is the tropical analogue of the well-known
short exact sequence
\[
1\rightarrow F^\times \rightarrow F(X)^\times
\rightarrow \Prin(X) \rightarrow 0,
\]
for an irreducible non-singular
algebraic curve $X$ over an algebraically closed field $F$,
where $F^\times$ denotes the field minus its zero element and
$F(X)^\times$ denotes the rational functions on $X$ which are
not identically 0.  In the tropical case
we replace $F^\times$ by $\Bbb T^\times = \rrr$ and
$F(X)^\times$ by $M(X)$.

We note that $\rrr$, $M(X)$, and $\Div (X)$ may be
viewed as  $\zzz[G]$-modules.
The action of $G$ on $\rrr$ is the trivial action.
The action of $G$ on $M(X)$ is given by $gf(P)=f(g^{-1}P)$, for $g
\in G$, $f \in M(X)$, and $P \in X$.  The action of $G$
on $\Div(X)$ is the obvious one, i.e.,
if $D=\sum a_P P$ and $g \in G$, then $gD = \sum a_P gP$.
We note that the actions
of $G$ on $M(X)$ and $\Div (X)$ are compatible, since if
$f \in M(X)$ and $g \in G$, then
\begin{align}
\text{div}(gf)&=\sum_{P \in X} \text{ord}_P (gf) P \notag \\
& = \sum_{Q \in X} \text{ord}_{gQ} (f \circ g^{-1})  gQ \notag \\
&=\sum_{Q \in X} \text{ord}_Q(f)  gQ \notag \\
&= g \ \text{div}  (f). \notag
\end{align}
Thus the map $\text{div} : M(X) \rightarrow \Div (X)$ is a
$\zzz[G]$-module homomorphism.

\begin{lemma}
\label{lem:exseq}
{\rm
There is a short exact sequence of $\zzz[G]$-modules,
\[
0\rightarrow \rrr \rightarrow M(X)
\rightarrow \Prin(X) \rightarrow 0.
\]
}
\end{lemma}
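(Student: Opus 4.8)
The plan is to exhibit the three maps of the sequence, verify that each is a $\zzz[G]$-module homomorphism, and then check exactness at each of the three nonzero spots. The maps are already essentially named in the excerpt: the map $M(X)\to\Prin(X)$ is $\mathrm{div}$, which has just been shown to be a $\zzz[G]$-module homomorphism via the computation $\mathrm{div}(gf)=g\,\mathrm{div}(f)$; the map $\rrr\to M(X)$ is the inclusion of $\rrr$ as the subgroup of constant functions. Since $G$ acts trivially on $\rrr$ and a constant function $c$ satisfies $gc=c\circ g^{-1}=c$, this inclusion is indeed $G$-equivariant, so I would record that first.

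Next I would verify exactness. Surjectivity of $\mathrm{div}:M(X)\to\Prin(X)$ is immediate, since $\Prin(X)$ is \emph{defined} to be the image of $\mathrm{div}$. Injectivity of the inclusion $\rrr\hookrightarrow M(X)$ is clear. The substantive point is exactness in the middle, i.e.\ that $\ker(\mathrm{div})$ equals the constant functions $\rrr$. One containment is easy: a constant function is linear with slope $0$ on every arm, so $\mathrm{ord}_P=0$ everywhere and $\mathrm{div}(c)=0$. For the reverse containment I would argue that if $f\in M(X)$ has $\mathrm{div}(f)=0$, then $f$ is constant. This is where the geometry of $X$ enters, and it is the step I expect to require the most care.

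The idea for proving $\ker(\mathrm{div})=\rrr$ is that a rational function with empty divisor can have no nonzero slopes anywhere. Concretely, suppose $\mathrm{ord}_P(f)=0$ for all $P$. On a leaf, the outward slope at the infinite point is forced to be $0$ by $\mathrm{ord}_P(f)=m\lambda=0$ with $m>0$, and since $f$ is piecewise linear with finitely many singular points, tracing inward and using $\mathrm{ord}_P(f)=0$ at each interior singular point propagates the vanishing of all slopes along the edge; hence $f$ is constant on each leaf. On the inner edges and at the vertices one runs the same local balancing argument: at a $2$-valent point with zero order the two outgoing slopes cancel, so $f$ is linear through such points, while at a singular or higher-valent point the order condition together with connectedness forces the slopes to remain zero. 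Since $X$ is connected, constancy propagates across all of $X_0$, giving $f\in\rrr$. The main obstacle is making this propagation argument fully rigorous in the presence of loops and of arbitrary edge multiplicities; one must ensure that the local slope-balancing at each point genuinely forces every segment slope to vanish rather than merely constraining a weighted sum, which may require choosing an orientation and summing slopes along paths, or invoking the already-noted fact that $f$ has only finitely many singular points so that the behavior is controlled edge by edge.

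Once exactness is established, the only remaining routine checks are that $\rrr$, $M(X)$, and $\Prin(X)$ are genuinely $\zzz[G]$-modules (already asserted in the text) and that the inclusion is $G$-equivariant (handled above), so the sequence is one of $\zzz[G]$-modules as claimed.
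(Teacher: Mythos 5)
Your setup (the maps, their $G$-equivariance, surjectivity of $\mathrm{div}$ by the definition of $\Prin(X)$, and the containment $\rrr\subseteq\ker(\mathrm{div})$) is correct and matches the paper, but the one step you yourself flag as delicate is a genuine gap, not a detail to be tidied. The inward-propagation argument does not close. At a point $P$ of valence $n_P\geq 3$, the condition $\mathrm{ord}_P(f)=\sum_k m(k)\lambda(k)=0$ constrains only a positively weighted \emph{sum} of the outgoing slopes; once two or more edges at $P$ carry as-yet-undetermined slopes, nothing forces each $\lambda(k)$ to vanish individually, since a positive and a negative slope can cancel, and the propagation stalls at the first such vertex. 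Moreover the argument has no base case on curves with cycles: the circle is an abstract tropical curve here, with no $1$-valent points at all, and on it the local conditions merely say $f$ is linear through every point; to conclude constancy you need the global observation that a function of constant nonzero slope cannot return to its starting value around a loop. Your suggested repair (orienting and summing slopes along paths) is exactly the nontrivial global argument you would still have to carry out, and on a graph with many independent cycles it amounts to a genuine flow/potential argument that your sketch does not supply.

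The paper sidesteps all of this with a short extremal argument, which is the missing idea. Order $0$ at each $1$-valent point forces $f$ to be constant on a punctured neighborhood of each infinite point (there $m\lambda=0$ with $m>0$ gives $\lambda=0$). Deleting these neighborhoods leaves a compact set $Y$ on which the continuous function $f$ attains its minimum. At a minimum point every outgoing slope is $\geq 0$, and since the weights $m(k)$ are positive and $\sum_k m(k)\lambda(k)=0$, every slope there is $0$; hence $f$ is locally constant at each minimum point, the minimum set is open as well as closed, and connectedness gives $f$ constant. This maximum-principle step handles loops, arbitrary multiplicities, and high-valence vertices uniformly, which are precisely the configurations where edge-by-edge propagation breaks down; you should replace your third paragraph with an argument of this kind.
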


\begin{proof}

The order of a rational function $f$ at a point is the sum of the outgoing
slopes. For $f$ to be in the kernel of the map $M(X) \rightarrow
\Prin(X)$, the sum of its outgoing slopes at each point must be equal to $0$.

For $f$ to have order $0$ at every $1$-valent point, $f$ must be constant
on a punctured
open neighborhood of each $1$-valent point (i.e., on a neighborhood of
the vertex minus the vertex itself).  Removing these
open sets gives us a compact set $Y$ on which $f$ is continuous.
Therefore, $f$ must take a  minimum somewhere on $Y$. But at
the point where the minimum is attained, all outgoing slopes are
greater than or equal to $0$. Since the slopes sum to 0, they
must, in fact, all be $0$.
Therefore, $f$ must be constant.
\end{proof}

By the definition of the Picard group, we have a short exact sequence of
$\zzz[G]$-modules.
\[
0\rightarrow \Prin(X) \rightarrow \Div(X)
\rightarrow \Pic(X) \rightarrow 0.
\]

From Lemma \ref{lem:exseq} and the
short exact sequence for $\Pic(X)$ above, we obtain
long exact sequences
\begin{equation}
\label{eqn:longer}
\begin{split}
0\rightarrow H^0(G,\rrr) \rightarrow H^0(G,M(X) )
\rightarrow H^0(G,\Prin(X))\rightarrow \\
H^1(G,\rrr) \rightarrow
H^1(G,M(X))
\rightarrow
H^1(G,\Prin(X)) \rightarrow \\
H^2(G,\rrr)\rightarrow
H^2(G,M(X))\rightarrow H^2(G,\Prin(X)) \rightarrow \ldots
\end{split}
\end{equation}
and
\begin{equation}
\label{eqn:long}
\begin{split}
0\rightarrow H^0(G,\Prin(X)) \rightarrow H^0(G,\Div(X))
\rightarrow H^0(G,\Pic(X)) \rightarrow \\
H^1(G,\Prin(X))
\rightarrow H^1(G,\Div(X))
\rightarrow H^1(G,\Pic(X)) \rightarrow  \\
H^2(G,\Prin(X))
\rightarrow H^2(G,\Div(X))
\rightarrow H^2(G,\Pic(X)) \rightarrow \ldots .
\end{split}
\end{equation}


\section{Proof of Main Result}
Let $X$ be an abstract tropical curve
and let $G$ be a finite subgroup of the
automorphism group  of $X$.
In order to prove our main result, Theorem \ref{theorem:main}, we will
compute various terms of the long exact sequences (\ref{eqn:longer})
and (\ref{eqn:long}).


\begin{lemma}
\label{lem:h1}

$$H^1(G, \rrr) = 0.$$

\end{lemma}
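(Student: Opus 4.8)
The plan is to use the standard fact that the cohomology of a finite group with coefficients in a uniquely divisible abelian group vanishes in all positive degrees. Here $G$ is finite (by the Theorem on $\Aut(X)$, since the relevant cases force finiteness) and $\rrr$ is a $\qqq$-vector space, hence uniquely divisible, with $G$ acting trivially. I would either invoke this general principle directly or give the short averaging argument, which is more self-contained and fits the elementary style of the paper.

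First I would take an arbitrary $1$-cocycle $\phi \in Z^1(G,\rrr)$. Since $G$ acts trivially on $\rrr$, the cocycle condition $\phi(g_1)+g_1\phi(g_2)=\phi(g_1g_2)$ simplifies to $\phi(g_1g_2)=\phi(g_1)+\phi(g_2)$, so $\phi$ is just a group homomorphism from $G$ into the additive group $\rrr$. Because $G$ is finite of order $n=|G|$, every element $g$ satisfies $n\phi(g)=\phi(g^n)=\phi(1)=0$, and since $\rrr$ is torsion-free this forces $\phi(g)=0$ for all $g$. Hence $Z^1(G,\rrr)=0$, and a fortiori $H^1(G,\rrr)=Z^1/B^1=0$.

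Alternatively, to display the coboundary explicitly (which also previews the averaging technique the paper will reuse for $M(X)$), I would set
\[
f=-\frac{1}{n}\sum_{h\in G}\phi(h),
\]
which makes sense precisely because $\rrr$ is divisible by $n$. Summing the cocycle identity over $h$ and using the trivial action gives $\phi(g)=gf-f=0$, exhibiting $\phi$ as a coboundary.

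I do not expect a genuine obstacle here: the only thing to be careful about is the appeal to finiteness of $G$, which is guaranteed by the earlier Theorem (the infinite-translation cases are excluded since $G$ is assumed finite), and the division by $|G|$, which is legitimate in $\rrr$. The argument is essentially the degenerate, trivial-action instance of the averaging computation that will carry real content in the proof of Hilbert's Theorem 90 for $M(X)$.
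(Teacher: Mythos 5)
Your proposal is correct and follows essentially the same route as the paper: the trivial action reduces the cocycle condition to $\phi$ being a homomorphism from the finite group $G$ into $(\rrr,+)$, which must vanish (your torsion-freeness argument just makes explicit the step the paper leaves implicit). The alternative averaging coboundary you sketch is a fine addition but not needed, since the paper, like your main argument, shows $Z^1(G,\rrr)=0$ outright.
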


\begin{proof}

Since the action of $G$ on $\rrr$ is trivial,
the condition on 1-cocycles reduces to
     $$Z^1(G, \rrr) = \{ \phi: G \rightarrow \rrr \mid
\forall g_1, g_2 \in G, \phi(g_1) + \phi(g_2) = \phi(g_1g_2) \}.$$
This means that $\phi$ is a homomorphism from the finite group $G$
to $\rrr$, so $\phi$ must be the zero map.
\end{proof}

\begin{corollary}
\label{cor:4}
{\rm
The following is a short exact sequence
\[
0\rightarrow \rrr \to M(X)^G
\rightarrow \Prin(X)^G \rightarrow
0.
\]
In particular, every $G$-invariant principal divisor
is the divisor of a $G$-invariant rational function.
}
\end{corollary}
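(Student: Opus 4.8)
The plan is to read the statement directly off the long exact sequence (\ref{eqn:longer}), which was already assembled from the short exact sequence of $\zzz[G]$-modules in Lemma \ref{lem:exseq}. Its initial segment is
$$0\rightarrow H^0(G,\rrr) \rightarrow H^0(G,M(X)) \rightarrow H^0(G,\Prin(X))\rightarrow H^1(G,\rrr) \rightarrow \cdots,$$
so the entire argument is a matter of identifying these terms and truncating.

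First I would use the identification $H^0(G,A)\cong A^G$ for each coefficient module. Since $G$ acts trivially on $\rrr$, one has $H^0(G,\rrr)=\rrr^G=\rrr$, while $H^0(G,M(X))=M(X)^G$ and $H^0(G,\Prin(X))=\Prin(X)^G$. Next I would invoke Lemma \ref{lem:h1}, which gives $H^1(G,\rrr)=0$. Substituting this vanishing into the sequence above collapses it to the four-term exact sequence
$$0\rightarrow \rrr \rightarrow M(X)^G \rightarrow \Prin(X)^G \rightarrow 0,$$
which is exactly the asserted short exact sequence. Exactness at the first three nodes is just the left-exactness of the $G$-invariants functor, while surjectivity of the map $M(X)^G \rightarrow \Prin(X)^G$ is precisely what the vanishing $H^1(G,\rrr)=0$ provides.

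For the final ``in particular'' clause I would note that the map $M(X)^G \rightarrow \Prin(X)^G$ appearing here is simply the restriction of $\text{div}$ to $G$-invariant rational functions. Its surjectivity therefore says that every $G$-invariant principal divisor lies in the image, i.e., equals $\text{div}(f)$ for some $f\in M(X)^G$. There is no real obstacle here, since the cohomological machinery and the key vanishing result are already in hand; the only points requiring care are the correct identification of the three $H^0$ terms and the observation that the relevant connecting map is $\text{div}$ restricted to invariants, so the argument is essentially mechanical.
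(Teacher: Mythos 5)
Your proposal is correct and is exactly the paper's argument: the paper's proof is the one-line ``Apply Lemma \ref{lem:h1} to the long exact sequence (\ref{eqn:longer}),'' and you have simply spelled out the identification of the $H^0$ terms with invariants and the truncation forced by $H^1(G,\rrr)=0$. (One terminological quibble: the map $M(X)^G\to\Prin(X)^G$ is the map induced by $\mathrm{div}$, not a connecting homomorphism --- the connecting map here is $\Prin(X)^G\to H^1(G,\rrr)$ --- but this does not affect the argument.)
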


\begin{proof}

Apply
Lemma \ref{lem:h1} to
the long exact sequence (\ref{eqn:longer}).
\end{proof}

In the case of an algebraic curve, $H^1(G,F(X)^\times )=1$,
by Hilbert's Theorem 90 (see, e.g., Rotman \cite{R} 10.128 and 10.129).
The following theorem is a tropical analogue of Hilbert's Theorem 90.

\begin{theorem}
\label{prop:thrm90}
{\rm Let $X$ be an abstract tropical curve
and let $G$ be a finite subgroup of the
automorphism group of $X$.  Then
$$H^1(G,M(X))=0,$$
where $M(X)$ is the group of rational functions on $X$
under tropical multiplication (classical addition).}
\end{theorem}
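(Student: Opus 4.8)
The plan is to show directly that every $1$-cocycle $\phi : G \to M(X)$ is a coboundary. Recall that any cocycle satisfies $\phi(1)=0$ and $\phi(g_1g_2)=\phi(g_1)+g_1\phi(g_2)$. The natural first move is to average. Since $G$ is finite, set
$$ f_0 = -\frac{1}{|G|}\sum_{h\in G}\phi(h). $$
Using the cocycle identity in the form $g\phi(h)=\phi(gh)-\phi(g)$ and reindexing the sum by $h\mapsto gh$, one computes $g f_0 - f_0 = \phi(g)$ for every $g\in G$. Thus $\phi$ is a coboundary as soon as $f_0$ is an honest rational function.

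The difficulty --- and the essential point where the tropical statement departs from the classical additive Hilbert 90 --- is that $f_0$ need not lie in $M(X)$: each $\phi(h)$ has integer slopes, so $f_0$ is continuous and piecewise linear with only finitely many singular points, but its slopes lie a priori only in $\frac{1}{|G|}\zzz$. Over $M(X)\otimes\qqq$ we are therefore already done, and the whole content of the theorem is to promote $f_0$ to an integer-slope representative.

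To do this I would exploit that $G$ acts by isometries. Fix a point $P$ at which $f_0$ is linear and a direction $v$ there, and write $s(P,v)$ for the corresponding slope. Because $g$ is an isometry, the slope of $gf_0=f_0\circ g^{-1}$ at $(P,v)$ equals $s(g^{-1}P,g^{-1}v)$, so the integer-slope condition on $\phi(g)=gf_0-f_0$ gives $s(g^{-1}P,g^{-1}v)\equiv s(P,v)\pmod{\zzz}$ for all $g$. Hence the fractional parts of the slopes of $f_0$ are constant along $G$-orbits of (point, direction) pairs. The idea is then to subtract a $G$-invariant correction: if $w\in M(X)\otimes\qqq$ is $G$-invariant and has, at every $(P,v)$, slope congruent mod $\zzz$ to $s(P,v)$, then $f_0-w$ has integer slopes, lies in $M(X)$, and still satisfies $g(f_0-w)-(f_0-w)=\phi(g)$ because $w$ is invariant. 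This $f_0-w$ is the desired coboundary.

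The main obstacle is the construction of $w$. The orbit-constant fractional-slope data is exactly the obstruction to $f_0$ being integral, and it descends to the quotient $X/G$; one must realize it by an actual continuous, $G$-invariant, piecewise-linear function. I would build $w$ on a fundamental domain (or on the quotient graph), assigning on each of the finitely many segments where $f_0$ has non-integer slope a slope with the prescribed residue, chosen $G$-invariantly, and then check consistency. The one place this can fail a priori is continuity around cycles of $X$ --- the prescribed slopes must integrate to zero around each loop --- and resolving this, using the freedom to modify $w$ by integer slopes together with the fact that $f_0$ itself is single-valued around every loop, is the crux of the argument. Note that the naive averaged choice $w=\frac{1}{|G|}\sum_{h\in G} hf_0$ does \emph{not} work, since its fractional slopes are shifted by a term of the form $\frac{1}{|G|}\sum_h n_h$ with $n_h\in\zzz$; so some genuine care in choosing $w$ is required.
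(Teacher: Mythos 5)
Your reduction is correct as far as it goes: the classical average $f_0=-\frac{1}{|G|}\sum_{h\in G}\phi(h)$ does satisfy $gf_0-f_0=\phi(g)$, and you have correctly isolated integrality of slopes as the only obstruction (your parenthetical observation that this already settles the $M(X)\otimes\qqq$ case is exactly the technique the paper uses for its lemma that $H^2(G,M(X)\otimes\qqq)=0$). But what you present is not a proof, because the construction of the $G$-invariant correction $w$ is deferred, and that step is not a technical detail --- it is equivalent in strength to the theorem itself. A $G$-invariant $w\in M(X)\otimes\qqq$ whose slopes agree with those of $f_0$ modulo $\zzz$ exists if and only if $\phi$ is an integral coboundary: given such a $w$, the function $f_0-w$ works, and conversely, given $f\in M(X)$ with $\phi(g)=gf-f$, one checks $g(f_0-f)=f_0-f$, so $w=f_0-f$ is precisely such a function. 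Hence the ``crux'' you postpone --- choosing slopes with prescribed residues, $G$-invariantly, that simultaneously integrate to zero around every cycle of $X$ --- carries the full weight of the statement, and your sketch offers no mechanism for resolving the cycle conditions jointly with $G$-invariance.

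The paper avoids the entire issue by averaging tropically rather than classically: it sets $f(P)=-\max_{g\in G}\{\phi(g)(P)\}$, the negative of the tropical average. A finite maximum of continuous piecewise-linear functions with integer slopes and finitely many singular points is again such a function, so $f\in M(X)$ with no denominators ever appearing; then the cocycle identity $h\phi(g)=\phi(hg)-\phi(h)$ together with the reindexing $g\mapsto hg$ gives
\begin{align*}
hf(P) = -\max_{g\in G}\{h\phi(g)(P)\}
= -\max_{g\in G}\{\phi(hg)(P)-\phi(h)(P)\}
= \phi(h)(P)+f(P),
\end{align*}
so $\phi(h)=hf-f$ directly. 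The lesson, if you want to repair your argument, is that in the tropical semiring the natural ``average'' over the group is the $\oplus$-sum (the maximum over the orbit of cocycle values), and taking it makes the integrality problem vanish before it arises.
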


\begin{proof}

Pick $\phi \in Z^1(G,M(X))$.
Let $f$ be the tropical sum
     $$f=-{\sum_{g\in G}}^{\text{trop}} \phi(g),$$
i.e., if $P \in X$,
     $$f(P) = -\max_{g \in G} \{\phi(g)(P)\},$$
which is the negative of the tropical average of $\phi$
over $G$.

We compute, for $h \in G$,
\begin{align}
hf(P) &= - \max \{h \phi(g)(P)\} \notag \\
&=- \max \{-\phi(h)(P)+\phi(hg)(P)\} \notag \\
&= \phi(h)(P) + f(P). \notag
\end{align}
Therefore every cocycle is a coboundary.
\end{proof}

\begin{lemma}
\label{lem:h2}

$$H^2(G, \rrr) = 0.$$

\end{lemma}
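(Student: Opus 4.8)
The plan is to show $H^2(G,\rrr)=0$ by exploiting the fact that $G$ is finite and acts trivially on $\rrr$, together with divisibility of $\rrr$ as an abelian group. The standard tool is the averaging (transfer) argument: for a finite group $G$ of order $|G|=N$, every cohomology group $H^n(G,A)$ with $n\geq 1$ is annihilated by multiplication by $N$. Since $\rrr$ is a $\qqq$-vector space, multiplication by $N$ is an isomorphism on $\rrr$, hence on $H^2(G,\rrr)$, so the only element killed by $N$ is $0$, forcing $H^2(G,\rrr)=0$.

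First I would take an arbitrary $2$-cocycle $\phi\in Z^2(G,\rrr)$ and produce an explicit $\psi:G\to\rrr$ exhibiting $N\phi$ (or equivalently $\phi$ itself, after dividing by $N$) as a coboundary. The explicit recipe is to set
\[
\psi(g) = \frac{1}{N}\sum_{h\in G}\phi(g,h).
\]
Then I would substitute this into the $2$-cocycle identity
\[
\phi(g_2,g_3)-\phi(g_1g_2,g_3)+\phi(g_1,g_2g_3)-\phi(g_1,g_2)=0
\]
(noting that the leading term $g_1\phi(g_2,g_3)$ equals $\phi(g_2,g_3)$ because the action is trivial), sum over the appropriate variable, and reindex the summation using that $h\mapsto g_3 h$ or $h\mapsto g h$ is a bijection of $G$. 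The computation should collapse to exactly the coboundary relation $\phi(g_1,g_2)=\psi(g_1)+\psi(g_2)-\psi(g_1g_2)$, again using triviality of the action so that $g_1\psi(g_2)=\psi(g_2)$.

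The key step where care is needed is the reindexing: one must verify that summing the cocycle identity over $g_3\in G$ and dividing by $N$ lines up the telescoping terms correctly, and that the sum $\sum_{h\in G}\phi(g_1g_2,h)$ matches $\psi(g_1g_2)$ after the change of variable. This is the analogue of the classical shifting argument, and here it is cleaner than in the tropical Hilbert 90 case (Theorem \ref{prop:thrm90}) because the group operation on the coefficients is ordinary addition in $\rrr$ rather than a max operation, and because division by $N$ is available in $\rrr$. I expect the only real obstacle to be bookkeeping in the index substitution; the availability of division by $|G|$ in $\rrr$ is what makes the vanishing hold, and this is precisely what distinguishes $\rrr$ from a coefficient module with torsion.
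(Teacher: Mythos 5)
Your proposal is correct and takes essentially the same approach as the paper: the paper's proof also defines $\psi(g) = \frac{1}{|G|}\sum_{h \in G}\phi(g,h)$ and verifies the coboundary relation $\phi(g_1,g_2)=\psi(g_1)+\psi(g_2)-\psi(g_1g_2)$ by substituting the cocycle identity and reindexing the sum via $h \mapsto g_2 h$. Your opening framing in terms of the transfer argument (multiplication by $|G|$ annihilating $H^2$, combined with unique divisibility of $\rrr$) is just an abstract restatement of the same averaging construction that you then carry out explicitly, exactly as the paper does.
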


\begin{proof}

Since the action of $G$ on $\rrr$ is trivial,
\[
\begin{array}{l}
Z^2(G,\rrr) =
\{\phi:G\times G\to \rrr\ |\
\forall g_1,g_2,h\in G,\\
\qquad \qquad \qquad \qquad
\phi(g_2,h) - \phi (g_1g_2,h)+ \phi(g_1,g_2h)- \phi(g_1,g_2)=0\}.
\end{array}
\]
Given $\phi \in Z^2(G,\rrr)$,
define $\psi : G \rightarrow \rrr$ by the classical sum
     $$\psi(g) = \frac 1 {\mid G \mid} \sum_{h \in G}
     \phi(g,h).$$
Then for any $g_1, g_2 \in G$ we have
\begin{align}
\psi(g_1) + g_1 \psi(g_2) - \psi (g_1 g_2) &=
     \psi(g_1) +  \psi(g_2) - \psi (g_1 g_2) \notag \\
&= \frac 1 {\mid G \mid}  \sum_{h \in G}
     \left( \phi(g_1,h) + \phi(g_2,h) - \phi(g_1 g_2,h) \right)\notag \\
&= \frac 1 {\mid G \mid}  \sum_{h \in G}
     \left( \phi(g_1,g_2 h) + \phi(g_2,h) - \phi(g_1 g_2,h) \right)\notag \\
&= \phi(g_1,g_2). \notag
\end{align}
Therefore every 2-cocycle is a 2-coboundary, so $H^2(G, \rrr) = 0$.
\end{proof}

\begin{corollary}\label{cor:h1prin}

$$H^1(G,\Prin(X)) =0.$$

\end{corollary}

\begin{proof}

Apply Proposition \ref{prop:thrm90} and Lemma
\ref{lem:h2} to the long exact sequence (\ref{eqn:longer}).
\end{proof}

The following theorem is our main result and implies
that the answer to the question raised in the
introduction is ``yes'' for all abstract tropical curves.

\begin{theorem}\label{theorem:main}
{\rm
Let $X$ be an abstract tropical curve
and let $G$ be a finite subgroup of the
automorphism group of $X$.
Then the map
\[
\Div (X)^G\to \Pic (X)^G
\]
is surjective, i.e., every $G$-invariant divisor
class contains a $G$-invariant divisor.
}
\end{theorem}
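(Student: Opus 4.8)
The plan is to read the result directly off the long exact sequence (\ref{eqn:long}) attached to the short exact sequence $0 \to \Prin(X) \to \Div(X) \to \Pic(X) \to 0$, using the vanishing of $H^1(G,\Prin(X))$ already established in Corollary \ref{cor:h1prin}. First I would recall that for any $\zzz[G]$-module $A$ the zeroth cohomology is canonically the module of invariants, $H^0(G,A) \cong A^G$. In particular $H^0(G,\Div(X)) = \Div(X)^G$ and $H^0(G,\Pic(X)) = \Pic(X)^G$, and by functoriality of $H^0$ applied to the quotient homomorphism $\Div(X) \to \Pic(X)$, the map named in the statement coincides with the induced arrow $H^0(G,\Div(X)) \to H^0(G,\Pic(X))$ appearing in (\ref{eqn:long}).

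Next I would isolate the relevant three-term segment of (\ref{eqn:long}),
\[
H^0(G,\Div(X)) \to H^0(G,\Pic(X)) \to H^1(G,\Prin(X)),
\]
and invoke exactness at the middle term: the image of $\Div(X)^G \to \Pic(X)^G$ equals the kernel of the connecting homomorphism $\Pic(X)^G \to H^1(G,\Prin(X))$. By Corollary \ref{cor:h1prin} we have $H^1(G,\Prin(X)) = 0$, so this kernel is all of $\Pic(X)^G$, and surjectivity of $\Div(X)^G \to \Pic(X)^G$ follows immediately. Unwinding what surjectivity means gives precisely the stated conclusion: every $G$-invariant divisor class is hit by some $G$-invariant divisor.

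There is essentially no obstacle left at this final stage; all of the genuine work lives in the earlier vanishing theorems, namely the tropical Hilbert 90 result $H^1(G,M(X)) = 0$ (Theorem \ref{prop:thrm90}) together with $H^2(G,\rrr) = 0$ (Lemma \ref{lem:h2}), which feed through (\ref{eqn:longer}) to produce $H^1(G,\Prin(X)) = 0$. Granting those, the main theorem is a one-step consequence of the exactness of group cohomology. The only point I would verify carefully is bookkeeping: that the identification $H^0 \cong (-)^G$ and the connecting map are taken with respect to the same $\zzz[G]$-module structures on $\Prin(X)$, $\Div(X)$, and $\Pic(X)$ throughout, so that the abstract arrow in (\ref{eqn:long}) really is the concrete map $\Div(X)^G \to \Pic(X)^G$ of the statement.
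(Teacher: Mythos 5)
Your proposal is correct and is exactly the paper's argument: the authors' proof of Theorem \ref{theorem:main} simply applies Corollary \ref{cor:h1prin} to the long exact sequence (\ref{eqn:long}), and you have spelled out the same step in detail, identifying $H^0(G,-)$ with invariants and using exactness at $H^0(G,\Pic(X))$ together with $H^1(G,\Prin(X))=0$. Your extra bookkeeping check that the abstract arrow in (\ref{eqn:long}) is the concrete map $\Div(X)^G \to \Pic(X)^G$ is sound and consistent with the $\zzz[G]$-module structures set up in Section 3.
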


\begin{proof}

Apply Corollary \ref{cor:h1prin} to
 the long exact sequence (\ref{eqn:long}).
\end{proof}

\section{Further Results on Group Cohomology \\
of Abstract Tropical Curves}

Let $X$ be an abstract tropical curve
and let $G$ be a finite subgroup of the
automorphism group of $X$.
Proposition \ref{prop:h1div=0} below is analogous to
a result for algebraic curves which is proven
in Goldstein, Guralnick, and Joyner
\cite{GGJ} using Shapiro's Lemma.  The proof below
is similar but more direct.

\begin{proposition}
\label{prop:h1div=0}

$$H^1(G,\Div(X))=0.$$

\end{proposition}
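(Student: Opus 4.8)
The plan is to show that $H^1(G,\Div(X))=0$ directly, by exploiting the fact that $\Div(X)$ is a free $\zzz$-module on the points of $X$ together with the fact that $G$ permutes those points. The key structural observation is that $\Div(X)$ decomposes as a direct sum of $\zzz[G]$-submodules indexed by the orbits of $G$ on $X$: if $\mathcal{O}$ is a $G$-orbit of points, then the subgroup of divisors supported on $\mathcal{O}$ is $G$-stable, and $\Div(X)=\bigoplus_{\mathcal{O}} \Div_{\mathcal{O}}(X)$ as $\zzz[G]$-modules. Since group cohomology commutes with direct sums in the coefficient module, it suffices to prove $H^1(G,\Div_{\mathcal{O}}(X))=0$ for each orbit $\mathcal{O}$.

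First I would fix an orbit $\mathcal{O}$ and observe that $\Div_{\mathcal{O}}(X)$ is precisely the permutation module $\zzz[G/H]$, where $H$ is the stabilizer in $G$ of a chosen basepoint $P\in\mathcal{O}$. This is the coinduced (equivalently, for finite groups, induced) module $\mathrm{Ind}_H^G \zzz$ with $\zzz$ carrying the trivial $H$-action. By Shapiro's Lemma, $H^1(G,\mathrm{Ind}_H^G\zzz)\cong H^1(H,\zzz)$, and since $\zzz$ has the trivial $H$-action, $H^1(H,\zzz)=\mathrm{Hom}(H,\zzz)=0$ because $H$ is finite. This is the Shapiro's Lemma route that the paper attributes to \cite{GGJ}. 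Since the excerpt announces that the proof given there is "more direct," I would instead prove the vanishing for the orbit summand by hand, avoiding the machinery of Shapiro's Lemma.

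The direct argument proceeds as follows. Let $\phi\in Z^1(G,\Div(X))$ be a $1$-cocycle. I want to produce a divisor $E$ with $\phi(g)=gE-E$ for all $g$. The natural candidate mimics the averaging trick used in the proof of Theorem \ref{prop:thrm90}, but over $\zzz$ one cannot divide by $|G|$, so the construction must be combinatorial rather than an average. For each orbit, pick a basepoint $P$ with stabilizer $H$; a cocycle $\phi$ is determined by its values, and I would evaluate the cocycle relation on the coset structure to pin down $\phi(g)$ in terms of $\phi$ restricted to coset representatives. The cleanest formulation: since $\Div_{\mathcal O}(X)$ has a $\zzz$-basis permuted freely-up-to-$H$ by $G$, write $\phi(g)=\sum_{Q\in\mathcal{O}} c_Q(g)\,Q$ and translate the cocycle condition $\phi(g_1)+g_1\phi(g_2)=\phi(g_1g_2)$ into a relation on the integer coefficients; then exhibit $E=\sum_{Q}e_Q\,Q$ whose coboundary matches $\phi$ by solving these relations explicitly along the orbit.

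The main obstacle will be carrying out this last bookkeeping over $\zzz$ without recourse to averaging: the proof of Hilbert 90 (Theorem \ref{prop:thrm90}) and of Lemma \ref{lem:h2} both used $\frac{1}{|G|}$-type or tropical-max constructions that are unavailable for an integer permutation module. The resolution is that one does not need to average at all; the permutation structure forces the coboundary relation to be solvable termwise along each orbit because the cocycle values on a free orbit are governed entirely by the values on coset representatives, and the stabilizer contribution vanishes precisely because $H^1(H,\zzz)=0$. I expect the cleanest write-up either to invoke Shapiro's Lemma outright or to reproduce its proof concretely on the orbit summand; the remark in the paper about being "more direct" suggests the authors unwind Shapiro's Lemma into this explicit orbit-by-orbit coefficient computation.
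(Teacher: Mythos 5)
Your proposal is correct, and its first half---the orbit decomposition of $\Div(X)$ together with Shapiro's Lemma---is a complete proof on its own; but it takes a genuinely different route at the key step. The paper uses exactly your decomposition, writing $\Div(X)=\bigoplus_P L_P$ over orbit representatives with $L_P\cong L'=\text{Coind}_{G_P}^G(\zzz)$, but it then deliberately avoids Shapiro's Lemma (which it attributes to the algebraic-curve argument in \cite{GGJ}) and instead kills $H^1(G,L')$ by hand: given a cocycle $\phi\in Z^1(G,L')$, the authors define $f:G\to\zzz$ by the explicit formula $f(h)=-\phi(h^{-1})(h)$, show $f(gh)=f(g)+f(h)$ for $g\in G_P$ so that $f|_{G_P}$ is a homomorphism into $\zzz$, hence identically zero because $G_P$ is finite (whence $f\in L'$), and then verify $\phi(k)=kf-f$ by substituting $l=k^{-1}h^{-1}$ into the cocycle identity. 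This single formula is precisely the ``unwinding of Shapiro's Lemma'' you predicted the authors would perform, but your own sketched direct argument stops just short of producing it: ``solve these relations explicitly along the orbit'' is the one step you never carry out, so as a \emph{direct} proof your proposal is incomplete, and it is only rescued by the Shapiro route you invoke in parallel. Note that the finiteness of the stabilizer enters both versions at the same pressure point---in yours as $H^1(H,\zzz)=\text{Hom}(H,\zzz)=0$, in the paper's as the vanishing of the homomorphism $f|_{G_P}$. One point you should justify rather than assert: since $X$ has uncountably many points, the orbit sum is infinite, and commuting $H^1(G,-)$ with an infinite direct sum needs an argument---for instance, that any cocycle $G\to\Div(X)$ takes finitely many values, each of finite support, so it lands in a finite $G$-stable sub-sum (the paper gestures at this via the Ext characterization of group cohomology). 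In exchange for its extra length, the paper's computation is self-contained and constructive, exhibiting the coboundary explicitly in the spirit of its proofs of Theorem \ref{prop:thrm90} and Lemma \ref{lem:h2}, while your Shapiro argument is shorter and situates the vanishing in standard homological terms.
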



\begin{proof}  For each $P \in X$, let $G_P$  be the stabilizer
subgroup of $G$ given by $G_P = \{ g \in G | gP=P\}$.
If $h_1$ and $h_2$ are elements of $G$ whose left cosets $\tilde {h_1}$
and $\tilde {h_2}$ in $G/G_P$ are equal, then $h_1P = h_2P$.
Therefore it makes sense to define, for the left coset $\tilde{h}$ of any element $h
\in G$, $\tilde{h}P = hP$.
Let
\[
L_P=\oplus_{\tilde h \in G/G_P} \zzz[\tilde h P].
\]
Let $GX$ be the set of all orbits of points in $X$ and let
$GX/G$ be a complete set of representatives in $X$ of these orbits.
Then $\Div (X)$ is the direct sum of the subgroups $L_P$ for $P$ in
$GX/G$.

Using the characterization of group cohomology as an Ext functor
(see, e.g., Rotman \cite{R} p. 870) and the fact that Ext preserves
direct products in its second argument (see, e.g., Rotman \cite{R} p. 854),
it follows that if $H^1(G,L_P)=0$ for all $P$ in $GX/G$, then
$H^1 (G, \Div (X))=0$.


Next we show that $L_P$ is isomorphic to the co-induced group
     $L^\prime = \text{Coind}_{G_P}^G(\zzz)$
given by
\[
L^\prime=\{f: G \rightarrow \zzz\ |\ f(gh)=f(h)\  \text{for all}
\ g \in G_P \ \text{and} \ h \in G\}.
\]
Each divisor in $L_P$ may be written in the form $\sum_{\tilde h \in G/G_P}
a(\tilde h) \tilde h P$, where $a(\tilde h)$ is an integer for each $\tilde h$.
Given such a divisor,
we define a function $f:G \rightarrow
\zzz$
by $f(h) = a(\tilde {h^{-1}})$.  It is easily checked that $f \in L^\prime$.
If $f \in L^\prime$, and if $\tilde {h_1} =
\tilde {h_2}$, for some $h_1$, and $h_2$ in $G$, then $f(h_1^{-1})=f(h_2^{-1})$,
so we may define $a(\tilde h) = f(h^{-1})$ and the corresponding divisor
$\sum_{\tilde h \in G/G_P} a(\tilde h) \tilde h P$ in $L_P$.

The action of $G$ on $L^\prime$ is given by $gf(h) = f(hg)$
for $g$ and $h$ in $G$.  This action is consistent with the
action of $G$ on $L_P$ and thus $L_P$ and $L^\prime$ are
isomorphic as $\zzz [G]$-modules.

We will show that every 1-cocycle of $G$ in $L^\prime$ is a 1-coboundary.
Suppose that $\phi: G \rightarrow L^\prime$ is in $Z^1 (G, L^\prime)$.
Let $f$ be the map $f: G \rightarrow \zzz$
given by
     $$f(h) = -\phi(h^{-1})(h)$$
for $h \in G$.
First we will show that $f \in L^\prime$ and then that
$\phi(k) = kf-f$ for all $k\in G$, so that $\phi \in B^1 (G,L^\prime)$.

Suppose that $g \in G_P$ and $h \in G$.  We have
\begin{align}
 f(gh) &=
- \phi(h^{-1}g^{-1})(gh) \notag \\
& =    - h^{-1} \phi(g^{-1})(gh) - \phi(h^{-1})(gh)
\qquad \text{since $\phi \in Z^1(G,L^\prime)$} \notag \\
&= - \phi(g^{-1})(g) - \phi(h^{-1})(gh)
\qquad \text{by the action of $G$ on $L^\prime$} \notag \\
&= - \phi(g^{-1})(g) - \phi(h^{-1})(h)
\qquad \text{because
$\phi(h^{-1}) \in L^\prime$ and $g \in G_P$} \notag \\
&=f(g) + f(h). \notag
\end{align}
In particular, the restriction of $f$ to $G_P$ is a homomorphism from $G_P$
to $\zzz$, so $f$ must be $0$ on $G_P$, since $G_P$ is finite.  Therefore
$f(gh) = f(h)$ for all $g \in G_P$ and $h \in G$, so $f$ is in $L^\prime$.

Now we check that $\phi(k) = kf - f$
for all $k \in G$.  For all $h$, $k$, and $l$ in $G$,
\begin{align} \phi(k)(h) &=
     - k\phi(l)(h) + \phi(kl)(h) \qquad \text{since $\phi \in Z^1(G,L^\prime)$} \notag \\
&=- \phi(l)(hk) + \phi(kl)(h) \qquad
\text{by the action of $G$ on $L^\prime$.} \notag
\end{align}
Letting $l = k^{-1}h^{-1}$ gives
\begin{align} \phi(k)(h) &=
- \phi(k^{-1}h^{-1})(hk) + \phi(h^{-1})(h) \notag \\
&=f(hk) - f(h) \notag \\
&= kf(h) - f(h). \notag
\end{align}
Hence $\phi$ is in $B^1(G, L^\prime)$, so $H^1(G,L^\prime) = H^1(G,L_P)=0$.
\end{proof}

In the case of an algebraic curve,
$H^2(G,F^\times(X)))=1$ by Tsen's theorem
(a function field
over an algebraically closed field is a $C^1$ field;
see the Corollaries on pages 96 and
109 of Shatz \cite{Sh}, or \S 4 and \S 7 of chapter X
in Serre \cite{S}). An analogue of Tsen's theorem for tropical curves
would be the computation of $H^2(G,M(X))$.
Such an analogue, if it exists, would be very interesting.  A partial result is as follows.

\begin{lemma}

$$H^2(G,M(X)\otimes \qqq)=0.$$

\end{lemma}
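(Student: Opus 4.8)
The plan is to prove $H^2(G, M(X) \otimes \qqq) = 0$ by exploiting the fact that $M(X) \otimes \qqq$ is a $\qqq$-vector space, which allows division by $|G|$ and hence the standard averaging (transfer) argument used already in the proof of Lemma \ref{lem:h2}. First I would tensor the short exact sequence of Lemma \ref{lem:exseq} with $\qqq$; since $\qqq$ is flat over $\zzz$, this yields a short exact sequence of $\zzz[G]$-modules
\[
0 \rightarrow \rrr \otimes \qqq \rightarrow M(X) \otimes \qqq \rightarrow \Prin(X) \otimes \qqq \rightarrow 0,
\]
and I would note $\rrr \otimes_\zzz \qqq \cong \rrr$. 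This gives a long exact sequence in cohomology analogous to (\ref{eqn:longer}), so that $H^2(G, M(X)\otimes\qqq)$ sits between $H^2(G,\rrr)$ and $H^2(G, \Prin(X)\otimes\qqq)$.

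The cleaner route, however, is to argue directly. Because $M(X)\otimes\qqq$ is uniquely divisible, I would show that $H^n(G, A) = 0$ for all $n \geq 1$ whenever $A$ is a $\qqq$-vector space with $G$-action. The key step is the averaging homotopy: given $\phi \in Z^2(G, A)$, I define $\psi : G \to A$ by the normalized sum
\[
\psi(g) = \frac{1}{|G|} \sum_{h \in G} \phi(g,h),
\]
exactly as in Lemma \ref{lem:h2}, and the cocycle identity forces $\phi(g_1,g_2) = \psi(g_1) + g_1\psi(g_2) - \psi(g_1 g_2)$, exhibiting $\phi$ as a coboundary. The only place the triviality of the $G$-action was used in Lemma \ref{lem:h2} is in the step collapsing $g_1\psi(g_2)$ to $\psi(g_2)$; here the $G$-action on $M(X)\otimes\qqq$ is nontrivial, so I would keep the $g_1$ in place and apply $g_1$ to the defining sum, then reindex the summation over $h$ to recover the cocycle relation. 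This is the standard fact that the cohomology of a finite group with coefficients in a uniquely divisible module vanishes in positive degrees.

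The main obstacle, and the point requiring care, is verifying that the averaging computation goes through with the nontrivial action intact. Concretely, one must check that
\[
\psi(g_1) + g_1\psi(g_2) - \psi(g_1 g_2) = \frac{1}{|G|}\sum_{h\in G}\bigl(\phi(g_1,h) + g_1\phi(g_2,h) - \phi(g_1 g_2,h)\bigr)
\]
and that each summand equals $\phi(g_1,g_2)$ after substituting the reindexing $h \mapsto g_2 h$ into the $2$-cocycle identity $g_1\phi(g_2,h) - \phi(g_1 g_2, h) + \phi(g_1, g_2 h) - \phi(g_1, g_2) = 0$. This is a routine but attention-demanding manipulation; the substitution must be done consistently so that the sum telescopes to $|G|\cdot\phi(g_1,g_2)$. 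I expect no genuine difficulty beyond bookkeeping, since the divisibility of $M(X)\otimes\qqq$ is precisely what makes the transfer argument available, in sharp contrast to $M(X)$ itself, whose integer structure is exactly why the analogue of Tsen's theorem for $H^2(G,M(X))$ remains open.
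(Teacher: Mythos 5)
Your proposal is correct and is essentially the paper's own proof: the authors likewise use the divisibility of $M(X)\otimes\qqq$ (existence of tropical $|G|$-th roots) to define $\psi(g)=\frac{1}{|G|}\sum_{h\in G}\phi(g,h)$, apply the $2$-cocycle identity to replace $g_1\phi(g_2,h)$, and reindex $h\mapsto g_2h$ so the remaining sums cancel, leaving $\phi(g_1,g_2)$ as a coboundary. Your preliminary tensored exact sequence is an unused detour, and your general claim that $H^n(G,A)=0$ for uniquely divisible $A$ is a valid strengthening, but the core computation you describe is exactly the one in the paper.
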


\begin{proof}
We will show that every 2-cocycle of $G$ in $M(X)\otimes \qqq$
is a 2-coboundary.
Suppose that $\phi \in Z^2(G,M(X)\otimes \qqq)$.  Since
(tropical) $\mid G \mid$-th roots exist in $M(X)\otimes \qqq$, we may define a map
$\psi : G \rightarrow M(X)\otimes \qqq$ by the classical sum
     $$\psi(g) = \frac 1 {\mid G \mid} \sum_{h \in G}
\phi(g,h).$$
Then for $g_1, g_2 \in G$ we have
     \begin{align}
\psi (g_1) &+ g_1 \psi (g_2) - \psi(g_1g_2) \notag \\
&=\frac 1 {\mid G \mid} \sum_{h \in G}
\left( \phi(g_1,h) + g_1 \phi(g_2,h) - \phi(g_1 g_2, h) \right)
\notag \\
&= \frac 1 {\mid G \mid} \sum_{h \in G}
\left( \phi(g_1,h) + \phi(g_1g_2,h)-\phi(g_1,g_2 h)+\phi(g_1,g_2)
-\phi(g_1g_2,h)\right) \notag \\
&=\phi(g_1,g_2) + \frac 1 {\mid G \mid} \sum_{h \in G}
\phi (g_1,h) - \frac 1 {\mid G \mid} \sum_{h \in G} \phi (g_1,g_2h) \notag \\
&=\phi(g_1,g_2). \notag
\end{align}
Hence $\phi$ is in $B^2(G,M(X)\otimes \qqq)$, so $H^2(G,M(X)\otimes \qqq)=0$.
\end{proof}


\section{Invariance in Degree 0}

Let $X$ be an abstract tropical curve
and let $G$ be a finite subgroup of the
automorphism group of $X$.
Let $\Pic^0(X)$ be the subgroup
of $\Pic(X)$ consisting of all degree $0$ divisors, i.e., the $\Pic^0(X)$
is the Jacobian variety of $X$.


\begin{remark}
{\rm
Consider the short exact sequence
\[
0 \rightarrow \Prin(X) \rightarrow
\Div^0(X) \rightarrow \Pic^0(X)
    \rightarrow 0.
\]
Note that the map
     $$\Div^0(X)^G \rightarrow \Pic^0(X)^G$$
is surjective, as a trivial consequence of our main result.
Thus every $G$-invariant degree zero divisor class contains a $G$-invariant degree zero divisor.
The classical curve case is more complicated.
}
\end{remark}

\begin{remark}
{\rm
Also, by Corollary \ref{cor:h1prin}, the map
 $$H^1(G,\Div^0(X)) \rightarrow H^1(G,\Pic^0(X))$$
is an injection.
}
\end{remark}

\noindent
{\it Acknowledgement}:
The authors would like to thank the anonymous referee for helpful suggestions.

\end{document}